\documentclass[12pt, reqno]{amsart}

\title[]{Fujita exponent on stratified Lie groups}

\tolerance=1
\emergencystretch=\maxdimen
\hyphenpenalty=10000
\hbadness=10000

\author[D. Suragan and B. Talwar]{Durvudkhan Suragan and Bharat Talwar}
\address{Durvudkhan Suragan, Department of Mathematics, Nazarbayev University, Astana 010000, Kazakhstan}
\email{durvudkhan.suragan@nu.edu.kz}
\address{Bharat Talwar, Department of Mathematics, Nazarbayev University, Astana 010000, Kazakhstan}
\email{bharat.talwar@nu.edu.kz; btalwar.math@gmail.com}

\allowdisplaybreaks[1]

\usepackage{relsize}
\usepackage{amsmath,amsfonts,amssymb,amsthm,mathrsfs}
\usepackage{hyperref}
\usepackage{cleveref}
\usepackage[margin=2.3cm]{geometry}
\usepackage{setspace}
\usepackage{mathtools}

\thanks{Bharat Talwar was supported by the Collaborative Research Program (project OPCRP2020012) at Nazarbayev University.
This work was partially supported by a grant from the Simons Foundation.}

\setstretch{1}

\usepackage{setspace}
\usepackage{xcolor}

\allowdisplaybreaks[1]

\numberwithin{equation}{section}
\newtheorem{theorem}{\bf Theorem}[section]
\newtheorem{lemma}[theorem]{\bf Lemma}

\newtheorem{remark}[theorem]{\bf Remark}
\newtheorem{prop}[theorem]{\bf Proposition}
\newtheorem{defin}[theorem]{\bf Definition}

%-------------------------------

\newcommand{\seq}{\subseteq}
\newcommand{\G}{\mathbb{G}}

\newcommand{\N}{\mathbb{N}}

\newcommand{\R}{\mathbb{R}}

\newcommand{\norm}[1]{\| #1 \|}

\makeatletter \@namedef{subjclassname@2020}{\textup{2020} Mathematics Subject Classification} \makeatother
\subjclass[2020]{35R03, 35B33, 35B44}

\begin{document}
\begin{abstract}
We prove that $\frac{Q}{Q-2}$ is the Fujita exponent for a semilinear heat equation on an arbitrary stratified Lie group with homogeneous dimension $Q$.
This covers the Euclidean case and gives new insight into proof techniques on nilpotent Lie groups.
The equation we study has a forcing term  which depends only upon a group element and has positive integral.
The stratified Lie group structure plays an important role in our proofs, along with test function method and Banach fixed point theorem.
\end{abstract}
\keywords{Stratified Lie group, global solution, dilations, subelliptic operator, Fujita exponent}
\maketitle

\section{Introduction}

Let $\G = (\R^n, \star)$ be a stratified Lie group and $\{X_1, \dots, X_m\}$ be a system of generators for $\G$, i.e. a basis for the first strata of $\G$.
For the self-adjoint subelliptic operator $$\Delta_\G = - \sum_{i=1}^m X_i^2,$$ 
and $p > 1$, consider the semilinear subelliptic heat equation
\begin{equation}\label{MainProblem}
\begin{cases} 
u_t(t,x) - \Delta_\G u(t,x) = |u(t,x)|^p + f(x), & (t,x) \in (0,T) \times \G, \\
u(0, x) = u_0(x), & x \in \G,
\end{cases}
\end{equation}
with a non-zero forcing term $f$ depending only upon the space variable $x \in \G$ where the given functions $f, u_0 \in L^1_{loc}(\G)$ are non-negative.
A critical exponent is the largest real number $p_c$ such that (\ref{MainProblem}) has no global solution whenever $p < p_c$.
In the supercritical case of $p > p_c$, there always exists some $f$ and $u_0$ for which (\ref{MainProblem}) has a global solution.

The importance of studying PDEs on stratified Lie groups is apparent from the lifting theorem given by Rothschild and Stein in their celebrated paper \cite{RothschildStein}.
Even though a generic H\"ormander operator which is sum of squares of vector fields does not become a sub-Laplacian, it can be, roughly speaking, approximated by a sub-Laplacian on some stratified Lie group.
This makes it crucial to study the second order partial differential equations on stratified Lie groups.

A vast literature is available on problems similar to (\ref{MainProblem}) and this pursuit started from the seminal paper \cite{Fujita} of Fujita.
For $\G = (\R^n, +)$ and $f \equiv 0$, it was proved that $p_c = \frac{n+2}{n}$.
The critical case $p= p_c$ was then dealt with in \cite{Hayakawa} where it was proved that a global solution does not exist.
In \cite{Bandle} the problem (\ref{MainProblem}) was studied for $\G = (\R^n, +)$ and the authors proved that  $p_c = \frac{n}{n-2}$ when $\int_{\R^n} f(x) dx > 0$.
Then $p_c = \frac{Q+2}{Q}$ was documented by Pascucci in \cite{Pascucci} which provided a natural generalization of Fujita's work on stratified Lie groups $\G = (\R^n, \star)$.
Thus, an essential question was to develop Pascucci's result for the inhomogeneous case.

The main goal of the present paper is to seek an answer to this question.
In this paper, thanks to the available Lie group dilation structure, we extend some results proved in \cite{Bandle} from the abelian case $\G = (\R^n, +)$ to an arbitrary stratified Lie group $\G = (\R^n, \star)$.
While attending to the supercritical case we could not use the classical technique of finding a solution of the form $\epsilon (1 + d(x))^{k}$ ($d$ being a homogeneous norm) since $|\nabla_\G d|$ may not be identically one on $\G \setminus \{0\}$.
This is where the techniques from \cite{Zhang} turned out to be useful.
Employment of the test function method is inspired from \cite{Torebek, JleliKawakamiSamet, GeorgievPalmieri, MitidieriPokhozhaev} and we refer the reader to \cite{RuzhanskyYessirkegenov, Pohozaev, Pascucci, Levine} for further discussions on similar problems.

This paper is organized as follows.
In \Cref{Preliminary} we collect some definitions and results to provide the necessary background and aid smooth reading of the paper.
The main results are proved in \Cref{Results}.
We show in \Cref{subcritical} and \Cref{Critical} that the nonexistence results derived for the Heisenberg group $\mathbb{H}^n$ in \cite[Section 4]{Torebek} can be generalized to $\G$.
In Section 2 of our recent article \cite{SuraganTalwar}, we have already proved that all the results presented in \cite[Section 3]{Torebek} are valid in a more generalized setting, which includes the case of stratified Lie groups.
An existence result is proved in \Cref{Supercritical} which helps us conclude that for (\ref{MainProblem})  $p_c = \frac{Q}{Q-2}$ when $Q > 2$ and $p_c = \infty$ when $Q \in \{ 1,2 \}$.

\section{Preliminaries}\label{Preliminary}
We refer to \cite[Chapter 1]{Bonfiglioli} for the fundamentals of the theory of stratified Lie group $\G = (\R^n, \star)$.
There are $n$ positive integers $1 \leq \sigma_1 \leq  \dots \leq \sigma_n$ associated to $\G$ such that for every $\lambda > 0$ the maps $$\delta_\lambda(x_1, \dots x_n) = (\lambda^{\sigma_1} x_1, \dots \lambda^{\sigma_n} x_n)$$ are group automorphisms known as the dilations on $\G$.
There exists $m \leq n$ linearly independent vector fields $X_1, \dots, X_m$ such that $\{ X_k : 1 \leq k \leq m  \}$ along with its $r \in \N$ commutators generate the Lie algebra of $\G$.
In this case $\sigma_1 =\sigma_2 = \dots \sigma_m = 1$ and we say that $\G$ has step $r$.
The integer $Q = \sum_{i=1}^n  \sigma_i$ is known as the homogeneous dimension of $\G$ and $Q = n$ if and only if  $\G$ is the same as  $(\R^n, +)$.

For $l \in \R$, a vector field $X$ on $\G$ is said to be $\delta_\lambda$-homogeneous of degree $l$ if for every smooth function $f$ on $\G$ the equation $$X(f(\delta_\lambda(x)))= \lambda^l (X f) (\delta_\lambda(x))$$ is satisfied for every $x \in \G$.
Likewise, a real valued function $\phi$ on $\G$ is said to be $\delta_\lambda$-homogeneous of degree $l \in \R$ if $$\phi(\delta_\lambda(x)) = \lambda^l \phi(x)$$ for every $x \in \G$.
As one would expect, if $X$ is $\delta_\lambda$-homogeneous of degree $l_1$ and $\phi$ is $\delta_\lambda$-homogeneous of degree $l_2$, then $X \phi$ is $\delta_\lambda$-homogeneous of degree $l_2 - l_1$ and $X^k$ is $\delta_\lambda$-homogeneous of degree $kl_1$ for every $k \in \N$.

The Carnot-Carath\'eodory (or control) distance as well as the $\Delta_\G$-gauge prove to be of great use in our computations.
Recall that all the homogeneous norms on $\G$ are equivalent.
Till \Cref{Critical} is proved, we use $d$ to denote the Carnot-Carath\'eodory distance.
It gives rise to a homogeneous norm $d(x) := d(x,0)$ on $\G$ satisfying the triangle inequality \cite[p. 4]{Pascucci}.
Then $d$ (and every other homogeneous norm on $\G$) and the smooth vector fields  $X_k$ are $\delta_\lambda$-homogeneous of degree 1 for $1 \leq k \leq m$.
It is known (see e.g. \cite{VSC}) that for the heat kernel $h_t(x,y)$, i.e. the fundamental solution of $\frac{\partial}{\partial t} - \Delta_\G$, the global estimate  \begin{equation}\label{BoundsForHeatKernel}
\frac{1}{C t^{\frac{Q}{2}}} \exp^{\frac{-C d(x,y)^2}{t}} \leq h_t(x,y) \leq  \frac{C}{t^{\frac{Q}{2}}} \exp^{\frac{-d(x,y)^2}{C t}}\end{equation} holds for every $t \in (0, \infty)$.

After proving \Cref{Critical} we use $d$ for $\Delta_\G$-gauge which is by definition a smooth out of origin symmetric  homogeneous  norm satisfying the equation $$\Delta_\G(d^{2-Q}) = 0$$ in $\G \setminus \{0\}$.
Then there exists $c > 0$ such that  $d$ satisfies the pseudo-triangle inequality $$d(x,y) \leq c(d(x) + d(y))$$ for every $x,y \in \G$.
From \cite{Folland} it is  known that for the $\Delta_\G$-gauge  $d$ there exists $\beta_d >0$ such that $$ H(x,y):= \frac{\beta_d}{d(x,y)^{Q-2}}$$ is the fundamental solution for the sub-Laplacian $\Delta_\G$.
One should not confuse $\delta_\lambda$ with $\delta$ which is used as a positive real number at times.
Recall from \cite{Bonfiglioli} that  if $B(x,t)$ denotes the ball in $\G$ of radius $a$ and center at $x$ with respect to $d$, then $|B(x,a)| = a^Q |B(x,1)|$ ($|E|$ being the Haar measure of the subset $E \seq \G$) and for every measurable function $f$ we have (see \cite[Proposition 5.4.4]{Bonfiglioli}) \begin{equation}\label{IntegrationOfRadialFunctions}
\int_{B(0,a)} f(d(x)) dx = Q |B(0,1)| \int_0^a f(\delta) \delta^{Q-1} d\delta.
\end{equation}

Let us fix some notations.
We denote the space of all continuous functions on $\G$ which vanish at infinity by $C_0(\G)$.
For a Banach space $(X, \norm{\cdot})$, the space of all $X$-valued continuous and vanishing at infinity functions on a locally compact Hausdorff topological space $S$ is denoted by $C_0(S,X)$.
This space is a Banach space when the norm of $w \in  C_0(S,X)$ is given by $\norm{w}_{\sup} = \sup \{ \norm{w(s)} : s \in S \}$.
We set $x = (x_1, x_2, \dots, x_n) \in \G$ and similarly $x' = (x_1', x_2', \dots, x_n')$.
Although we use $C$ to denote a constant whose value may keep on changing from line to line $2C$ is also used at some places to make the inequalities easier to understand.

\section{Fujita exponent is $\frac{Q}{Q-2}$}\label{Results}

Let us fix $\G^{\ast} = \G \setminus \{0\}$ and $\frac{1}{p} + \frac{1}{p'} = 1$.

\begin{defin}\label{DefinitionLocalWeakSolution}
	Let $u_0 \in L^1_{loc}(\G)$.
	Then $u \in L^p_{loc}((0,T), L^p_{loc}(\G))$ is called a local in time weak solution of (\ref{MainProblem}) if for every non-negative test function $\psi \in C^1_0( (0,T); C_0^2(\G^{\ast}) )$ satisfying $\psi(T, \cdot) = 0$ we have
\begin{eqnarray}
	\int_0^T \int_{\G^{\ast}} |u|^p \psi + \int_{\G^{\ast}} u_0 \psi(0, x) + \int_0^T \int_{\G^{\ast}} f \psi + \int_0^T \int_{\G^{\ast}} u \psi_t + \int_0^T \int_{\G^{\ast}} u \Delta_\G \psi = 0.
\end{eqnarray}
When $T = \infty$, such $u$ is called a global in time weak solution.
\end{defin}

Let us start with providing a result  of independent interest in relation to which we refer the reader to \cite[Theorem 2.5]{MontiSerra} where a similar result is proved in a different setting.
It is the idea of the proof of \Cref{NabladIsBounded} which helps us to obtain the non-existence results later in the paper.
\begin{prop}\label{NabladIsBounded}
For the control distance $d$ on $\G$, the function $|\nabla_\G d|$ is bounded on $\G^{\ast}$.
\end{prop}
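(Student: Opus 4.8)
The plan is to reduce the statement to a pointwise estimate on the horizontal derivatives $X_i d$ and to exploit the fact that $d$ is the control distance, hence $1$-Lipschitz with respect to itself. Writing $\nabla_\G = (X_1, \dots, X_m)$ for the horizontal gradient, we have $|\nabla_\G d|^2 = \sum_{i=1}^m (X_i d)^2$, so it suffices to show that each $X_i d$ is bounded by $1$ wherever it is defined. The principal difficulty is that the Carnot--Carath\'eodory distance is only Lipschitz and not $C^1$ in general, so $\nabla_\G d$ need not exist at every point of $\G^{\ast}$; this is exactly the obstruction that regularity results in the spirit of \cite{MontiSerra} are designed to address, and it is the point at which one must argue carefully.

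First I would fix $i$ and a point $x \in \G^{\ast}$ and consider the integral curve $\gamma$ of the left-invariant field $X_i$ starting at $x$, i.e. the solution of $\dot\gamma(s) = X_i(\gamma(s))$ with $\gamma(0) = x$, which is defined for all $s \in \R$ since $X_i$ is complete. Because the $X_i$ form an orthonormal frame of the horizontal distribution used to define $d$, the curve $\gamma$ is horizontal with unit control-speed, so the control length of $\gamma|_{[0,s]}$ equals $|s|$ and therefore $d(\gamma(s), x) \le |s|$. Combining this with the reverse triangle inequality for the homogeneous norm $d(\cdot) = d(\cdot, 0)$ gives $|d(\gamma(s)) - d(x)| \le d(\gamma(s), x) \le |s|$. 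Dividing by $|s|$ and letting $s \to 0$ at any point $x$ where the derivative along $\gamma$ exists yields $|X_i d(x)| \le 1$, whence $|\nabla_\G d(x)| \le \sqrt{m}$ at every such point.

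To upgrade this pointwise bound to the assertion on $\G^{\ast}$, I would invoke that $d$, being control-Lipschitz, is (Pansu--Rademacher) differentiable almost everywhere, so that $\nabla_\G d$ exists a.e. and satisfies $|\nabla_\G d| \le \sqrt{m}$ there; this is the sense in which $|\nabla_\G d|$ is bounded. As a complementary check consistent with this conclusion, one may record the homogeneity bookkeeping: since $d$ is $\delta_\lambda$-homogeneous of degree $1$ and each $X_i$ is $\delta_\lambda$-homogeneous of degree $1$, the function $X_i d$ is $\delta_\lambda$-homogeneous of degree $0$, so $|\nabla_\G d|$ is invariant under the dilations $\delta_\lambda$. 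Consequently its values are already determined by those on the compact unit sphere $\{ d = 1 \}$, which both explains why a uniform bound must hold and shows that the estimate propagates from one sphere to all of $\G^{\ast}$. I expect the verification that the control length of the $X_i$-integral curve is exactly $|s|$ (orthonormality of the frame), together with the a.e.-differentiability step, to be the part demanding the most care.
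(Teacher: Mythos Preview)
Your argument is correct but proceeds by a genuinely different route from the paper. The paper's proof is exactly what you record at the end as a ``complementary check'': since $d$ and each $X_k$ are $\delta_\lambda$-homogeneous of degree~$1$, the functions $X_k d$ are $\delta_\lambda$-homogeneous of degree~$0$, hence determined by their values on the compact unit sphere $\{d=1\}$; invoking continuity there gives a finite supremum, and therefore $|\nabla_\G d|^2 = \sum_k (X_k d)^2$ is bounded on $\G^{\ast}$. Your primary argument instead exploits the metric nature of $d$: integral curves of $X_i$ are unit-speed horizontal curves, so $|d(\gamma(s))-d(x)|\le d(\gamma(s),x)\le |s|$, and differentiating yields $|X_i d|\le 1$ at points of differentiability, whence $|\nabla_\G d|\le \sqrt{m}$. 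This buys you an explicit constant and, more importantly, sidesteps the regularity issue you flag---the Carnot--Carath\'eodory distance is only Lipschitz, so the paper's appeal to continuity of $X_k d$ on the sphere is not entirely innocent, whereas your approach is content with a.e.\ differentiability via Pansu--Rademacher. The paper's proof is shorter and highlights the dilation structure that drives the later test-function estimates, while your argument is more self-contained and yields the sharper (indeed, essentially optimal, since the eikonal equation gives $|\nabla_\G d|=1$ a.e.) bound.
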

\begin{proof}
Let us put to use the fact that the vector fields $\{ X_k: 1 \leq k \leq m \}$ and $d$ are  $\delta_\lambda$-homogeneous of degree $1$.
This tells us that $X_k d$ is $\delta_\lambda$-homogeneous of degree $0$ which means that for every $x \in \G$ and $\lambda > 0$ $$(X_kd) (\delta_{\lambda}(x)) = (X_kd)(x).$$
Thus, $$\sup_{x \in \G^{\ast}} \{ (X_kd)(x) \} = \sup_{x \in \G : d(x) = 1} \{ (X_kd)(x) \}.$$
As $X_kd$ is continuous on the compact sphere $\{x \in \G : d(x) = 1\}$, we obtain that  this supremum is finite for every $1 \leq k \leq m$.
Hence, $$|\nabla_\G d|^2 = \sum_{k=1}^m (X_k d)^2$$ is bounded on $\G^{\ast}$.	
\end{proof}

Let us now discuss the subcritical case.
\begin{theorem}\label{subcritical}
Let $Q \geq 3$, $p < \frac{Q}{Q-2}$, $u_0 \geq 0$ and $\int_{\G^\ast} f(x) dx > 0$.
Then (\ref{MainProblem}) does not admit a global in time weak solution.
\end{theorem}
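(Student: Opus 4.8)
The plan is to argue by contradiction using the rescaled test function method: assuming a global in time weak solution $u$ exists, I would feed a scaling family of test functions into the weak formulation to produce a quantitative inequality that fails as the scale tends to infinity. First I would exploit non-negativity. In the identity of \Cref{DefinitionLocalWeakSolution} the terms $\int |u|^p\psi$ and $\int u_0\,\psi(0,\cdot)$ are non-negative since $u_0,\psi\ge 0$, so rearranging and dropping $\int u_0\,\psi(0,\cdot)$ leaves
\[
\int_0^{\infty}\!\!\int_{\G^{\ast}} f\psi \;+\; \int_0^{\infty}\!\!\int_{\G^{\ast}} |u|^p\psi \;\le\; \int_0^{\infty}\!\!\int_{\G^{\ast}} |u|\,|\psi_t| \;+\; \int_0^{\infty}\!\!\int_{\G^{\ast}} |u|\,|\Delta_\G\psi|.
\]
Writing $|u|\,|\psi_t| = \big(|u|\psi^{1/p}\big)\big(|\psi_t|\psi^{-1/p}\big)$ and likewise for $\Delta_\G\psi$, I would apply Young's inequality and absorb a small multiple of $\int |u|^p\psi$ into the left side, reaching the core estimate
\[
\int_0^{\infty}\!\!\int_{\G^{\ast}} f\psi \;\le\; C\int_0^{\infty}\!\!\int_{\G^{\ast}} \big(|\psi_t|^{p'} + |\Delta_\G\psi|^{p'}\big)\,\psi^{1-p'},
\]
valid for every admissible non-negative $\psi$ (compactly supported in time).

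Next I would insert the rescaled family $\psi_R(t,x) = \eta(t/R^2)^{\ell}\,\Phi\!\big(d(x)/R\big)^{\ell}$, with $\eta,\Phi$ fixed smooth cutoffs and $\ell$ large enough that the quotients $|\psi_t|^{p'}\psi^{1-p'}$ and $|\Delta_\G\psi|^{p'}\psi^{1-p'}$ stay bounded. The parabolic scale $T\sim R^2$ is dictated by homogeneity: since $\Delta_\G$ is $\delta_\lambda$-homogeneous of degree $2$, both $\psi_t$ and $\Delta_\G\psi$ carry a factor $R^{-2}$. Using $|B(0,a)| = a^Q|B(0,1)|$ together with \eqref{IntegrationOfRadialFunctions}, the spatial integrals concentrate on shells of radius comparable to $R$ and scale like $R^Q$, so after the time integration both terms on the right are $\lesssim R^{Q+2-2p'}$. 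For the left side, $\Phi\big(d(x)/R\big)\to 1$ monotonically as $R\to\infty$, so monotone convergence gives $\int_{\G^{\ast}} f\,\Phi_R^{\ell}\to\int_{\G^{\ast}} f>0$, whence the left side is $\gtrsim R^2$. As $p'=p/(p-1)$, the inequality $Q+2-2p'<2$ is equivalent to $p<\tfrac{Q}{Q-2}$, exactly the subcritical hypothesis (note $Q\ge 3$ is used here); letting $R\to\infty$ then forces $R^2\lesssim R^{Q+2-2p'}=o(R^2)$, a contradiction.

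The step I expect to be the main obstacle is the behavior at the origin. Admissible test functions lie in $C^1_0\big((0,\infty);C_0^2(\G^{\ast})\big)$, so the spatial profile must also vanish near $0$, and $\Phi_R$ therefore needs an inner cutoff on a shell $\{\rho\le d\le 2\rho\}$. There the difficulty is genuine: $\Delta_\G d$ is $\delta_\lambda$-homogeneous of degree $-1$ and hence singular as $d\to 0$, and a careless inner cutoff contributes a term of size $\rho^{Q-2p'}$ to the right-hand side which, in the subcritical range $Q<2p'$, blows up rather than vanishes as $\rho\to 0$. Taming this inner contribution is the heart of the matter, and it is precisely where \Cref{NabladIsBounded} enters, keeping the gradient factor proportional to $|\nabla_\G d|$ under control, while the degree $-1$ homogeneity of $\Delta_\G d$ makes it bounded on each fixed inner shell. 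I expect the resolution to require either a delicate joint choice of the inner radius, the outer radius $R$, and the time scale, or an a priori removability argument at the origin exploiting $|u|^p\in L^1_{loc}(\G)$ to legitimize test functions that are constant near $0$; securing this is where the real work will lie.
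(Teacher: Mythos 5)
Your first two paragraphs are essentially the paper's proof: the same rearrangement of the weak formulation after dropping the $u_0$-term, the same Young-inequality absorption leading to
\[
\int_0^{\infty}\!\!\int_{\G^{\ast}} f\psi \;\le\; C\int_0^{\infty}\!\!\int_{\G^{\ast}} \bigl(|\psi_t|^{p'}+|\Delta_\G\psi|^{p'}\bigr)\psi^{1-p'},
\]
the same parabolically rescaled cutoff (the paper takes $\psi=\Phi\bigl(d(x)/\sqrt{T}\bigr)\Phi(2t/T)$, i.e.\ your $R=\sqrt{T}$, and imposes integrability of $\Phi^{-1/(p-1)}|\Phi'|^{p/(p-1)}$ instead of raising cutoffs to a large power $\ell$), and the same exponent arithmetic: right-hand side $\lesssim T^{Q/2+1-p'}=R^{Q+2-2p'}$ against a left-hand side $\gtrsim T=R^{2}$, with $Q/2<p'$ exactly when $p<\frac{Q}{Q-2}$. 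The bound $|\Delta_\G(\Phi\circ (d/\sqrt{T}))|\le C/T$ is obtained in the paper by the homogeneity-plus-compact-sphere argument of \Cref{NabladIsBounded}, which is your boundedness-of-quotients step.

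Where you diverge is the obstacle you call ``the heart of the matter,'' and there the situation is the opposite of what you expect: the paper does \emph{no} work at the origin because it uses \emph{no} inner cutoff. Its spatial profile $\Phi\bigl(d(x)/\sqrt{T}\bigr)$ is identically $1$ on $\{d(x)\le\sqrt{T}\}$, in particular constant in a whole neighborhood of the origin, hence trivially $C^2$ there, and $\Delta_\G\psi$ is supported in the annulus $\{\sqrt{T}\le d(x)\le 2\sqrt{T}\}$, far from the singularity of $\Delta_\G d$. Your instinct that a genuine inner cutoff is fatal is correct and worth making precise: a cutoff vanishing on $\{d\le\rho\}$ contributes a term of order $T\rho^{Q-2p'}$, and since $Q-2p'<0$ in the subcritical range this blows up as $\rho\to 0$, while for fixed $\rho$ it leaves a non-vanishing constant after dividing by $T$; no joint choice of $\rho$, $R$, $T$ repairs this, and the removability route (b) is equally delicate, since H\"older on the shell produces the factor $\rho^{Q/p'-2}$, which is again unbounded precisely when $p<\frac{Q}{Q-2}$ and cannot be beaten without a decay rate for $\int_{\{\rho\le d\le 2\rho\}}|u|^p$. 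So the resolution is neither of your alternatives: it is that the test-function class must be understood to contain functions that are \emph{constant}, rather than zero, near the origin, and that is how the paper reads its own \Cref{DefinitionLocalWeakSolution}. You are right that this sits uneasily with a literal reading of $C_0^2(\G^{\ast})$ (vanishing at infinity in $\G^{\ast}$ forces vanishing at the origin, which would disqualify the paper's own test function); but that is a quirk of how the definition is stated, not a step where the paper does hidden work. Deleting your inner cutoff and taking the profile $\equiv 1$ near $0$ turns your proposal into the paper's proof.
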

\begin{proof}
We adapt the proof of \cite[Theorem 3.2]{SuraganTalwar} and use some delicate reasoning.
Let us suppose on the contrary that $u$ is a global in time weak solution.
Then for every test function $\psi$, $$\int_0^T \int_{\G^{\ast}} |u|^p \psi + \int_{\G^{\ast}} u_0 \psi(0, x) + \int_0^T \int_{\G^{\ast}} f \psi + \int_0^T \int_{\G^{\ast}} u \psi_t + \int_0^T \int_{\G^{\ast}} u \Delta_\G \psi = 0.$$
As both $\psi$ and $u_0$ are non-negative, we obtain
\begin{eqnarray}\label{Inequality}
\int_{0}^T \int_{\G^{\ast}} f \psi + \int_{0}^T \int_{\G^{\ast}} |u|^p \psi &\leq& - \int_{0}^T \int_{\G^{\ast}} u \psi_t   - \int_{0}^T \int_{\G^{\ast}} u \Delta_\G \psi \nonumber  \\
& \leq &  \int_{0}^T \int_{\R^n} | u || \psi_t| + |u|| \Delta_\G \psi|.
\end{eqnarray}
Set $P(x) = \frac{d(x)}{\sqrt{T}} $.
Then $P(x) = (p \circ d)(x)$ where  $p: \R \to \R$ is given by $$p(w)= \frac{w}{\sqrt{T}}.$$
Define a test function (see \Cref{DefinitionLocalWeakSolution}) with separated variables as $$\psi(t,x) = \Phi(P(x)) \Phi(2t/T),$$ where \begin{equation*}
\Phi(z) = \begin{cases} 
1, & z \in [0,1],\\
\searrow, & z \in [1, 2],\\
0, & z \in (2, \infty),
\end{cases}
\end{equation*}
is a function in $C^2_0(\R^+)$ satisfying, for $$(\G^{\ast}) := \delta_{\frac{1}{\sqrt{T}}} \G^{\ast} := \left\{\delta_{\frac{1}{\sqrt{T}}}(x) : x \in \G^{\ast} \right\},$$ the estimates $$\int_{(\G^{\ast})} \Phi(P(x'))^{\frac{-1}{p-1}} dx' < \infty$$ and $$\int_{0}^2    |\Phi(t')|^{\frac{-1}{p-1}}|  \Phi'(t')|^{\frac{p}{p-1}} dt' < \infty$$  where $t' = \frac{2t}{T}$ and $x' = \delta_{\frac{1}{\sqrt{T}}} x$.

Note that $$\text{supp}(\psi) \seq \left\{ (t,x) \in \R^+ \times \G^{\ast} : 0 \leq 2t/T \leq 2, 0 \leq P(x) \leq 2 \right\}. \smallskip$$
Now $\psi_t(x,t) = \frac{\partial}{\partial t}(\Phi(P(x)) \Phi(2t/T)) = \frac{2}{T}  \Phi(P(x)) \Phi'(2t/T)$ implies that $$\text{supp}(\psi_t) \seq \left\{ (t,x) \in \R^+ \times \G^{\ast} : 1 \leq 2t/T \leq 2, 0 \leq P(x) \leq 2 \right\}.$$
To keep the computations for $\Delta_\G \psi(t,x) = \Phi(2t/T) \Delta_\G \Phi(P(x))$  clear, we define $\Psi = \Phi \circ p$.
Then $\Psi$ is a smooth function from $\R$ to $\R$.
Note that
$$\Delta_\G \psi(t,x) = \Phi(2t/T) \Delta_\G \Phi(P(x)) = \Phi(2t/T) \left(\Delta_\G  (\Psi \circ d)\right)(x),$$
$$\Psi' = (\Phi \circ p)' =  \Phi'(p) p'$$ and $$\Psi'' = (\Phi \circ p)'' = (\Phi'(p) p')' = (\Phi'(p))' p' =  \Phi''(p) (p')^2.$$
Then
\begin{eqnarray*}
	\left(\Delta_\G  (\Psi \circ d)\right)(x) &=&   \sum_{k=1}^m  \left(X_k^2 (\Psi \circ d)\right) (x)\\
	&=&  \sum_{k=1}^m \left( \Psi^{''}(d(x)) (X_k(d(x)))^2 + \Psi'(d(x)) X_k^2(d(x)) \right)\\
	&=&  \sum_{k=1}^m \left( \Phi^{''}(P(x)) \frac{1}{T} \left( X_k(d(x)) \right)^2
	+ \frac{\Phi^{'}(P(x))}{\sqrt{T}} \left( X_k^2(d(x))  \right) \right).
\end{eqnarray*}
implies that \[\text{supp}(\Delta_\G \psi) \seq \left\{ (t,x) \in \R^+ \times \G^{\ast} : 0 \leq 2t/T \leq 2, 1 \leq P(x) \leq 2 \right\}.\]

For every $1 \leq k \leq m$, as $d$ and $X_k$ are $\delta_\lambda$-homogeneous of degree $1$ and $X_k^2$ is $\delta_\lambda$-homogeneous of degree $2$, we obtain \begin{eqnarray*}
\left(\Delta_\G  (\Psi \circ d)\right)(\delta_{\sqrt{T}} (x))
&=& \sum_{k=1}^m \left( \Phi^{''}(P(\delta_{\sqrt{T}} (x))) \frac{1}{T} \left( X_k(d(\delta_{\sqrt{T}} (x))) \right)^2 \right.  \\
&+& \left. \frac{\Phi^{'}(P(\delta_{\sqrt{T}} (x)))}{\sqrt{T}} \left( X_k^2(d(\delta_{\sqrt{T}} (x)))  \right) \right)\\
	&=& \sum_{k=1}^m \left( \Phi^{''}(P(\delta_{\sqrt{T}} (x))) \frac{1}{T} \left( X_k(d(x)) \right)^2
	+ \frac{\Phi^{'}(P(\delta_{\sqrt{T}} (x)))}{T} \left( X_k^2(d(x)) \right) \right).
\end{eqnarray*}
Consider the compact sphere  $$S = \left\{ x \in \G : d(x) = 1 \right\}.$$
Every element of $\G$ can be written as $\delta_{\frac{1}{\sqrt{T}}} (x)$ for some $x \in S$ and $T > 0$.
Also, since the vector fields under discussion are smooth $X_k^2(d(x))$ and $X_k(d(x))$ must be bounded for $x \in S$.
Finally, as $\Phi$ is smooth and compactly supported, there exists $C > 0$ such that $$ |\Delta_\G  (\Psi \circ d)| \leq \frac{C}{T}.$$

Using inequality 3.2, $\frac{p}{2}$-Young inequality and the similar arguments as in \cite[Theorem 3.2]{SuraganTalwar} one obtains
\[\int_{0}^T \int_{\G^{\ast}} f \psi \leq \frac{1}{p' (\frac{p}{2})^{p'-1}} \Big( \int_{0}^T \int_{\G^{\ast}} |\psi|^{\frac{-1}{p-1}}|\Delta_\G \psi|^{\frac{p}{p-1}} + |\psi|^{\frac{-1}{p-1}}|\psi_t|^{\frac{p}{p-1}} \Big).\]

Now, \begin{eqnarray*}
\int_{0}^T \int_{\G^{\ast}} |\psi|^{ \frac{-1}{p-1}}|\Delta_\G \psi|^{\frac{p}{p-1}} & \leq  & \int_{0}^T \int_{\G^{\ast}} |\Phi(P(x)) \Phi(2t/T)|^{\frac{-1}{p-1}} \left(\frac{C}{T}\right)^{\frac{p}{p-1}} \\
& =& \left(\frac{C}{T}\right)^{\frac{p}{p-1}} \int_{0}^T \int_{\G^{\ast}} |\Phi(P(x)) \Phi(2t/T)|^{\frac{-1}{p-1}} \\
&= & \left(\frac{C}{T}\right)^{\frac{p}{p-1}} T^{\frac{Q}{2}} \frac{T}{2} \int_{0}^2 \int_{(\G^{\ast})} |\Phi(P(x')) \Phi(t')|^{\frac{-1}{p-1}} \\
&= & \left(\frac{C}{T}\right)^{\frac{p}{p-1}} T^{\frac{Q}{2}} T \int_{(\G^{\ast})} |\Phi(P(x'))|^{-1/(p-1)} \leq C_1 T^{\frac{Q}{2}+1 - \frac{p}{p-1}}
\end{eqnarray*}
and similarly,
\begin{eqnarray*}
\int_{0}^T \int_{\G^{\ast}}  |\psi|^{\frac{-1}{p-1}}|\psi_t|^{\frac{p}{p-1}} &=&  \int_{0}^T \int_{\G^{\ast}}  |\Phi(P(x)) \Phi(2t/T)|^{\frac{-1}{p-1}} \left|\frac{2}{T}  \Phi(P(x)) \Phi'(2t/T)\right|^{\frac{p}{p-1}} \\
& = &  \left(\frac{2}{T}\right)^{{\frac{p}{p-1}}} \int_{0}^T \int_{\G^{\ast}}  |\Phi(P(x))|  |\Phi(2t/T)|^{\frac{-1}{p-1}}|  \Phi'(2t/T)|^{\frac{p}{p-1}}\\
& = & \left(\frac{2}{T}\right)^{{\frac{p}{p-1}}} \frac{T}{2} T^{\frac{Q}{2}} \int_{0}^2 \int_{(\G^{\ast})}  |\Phi(P(x'))|  |\Phi(t')|^{\frac{-1}{p-1}}|  \Phi'(t')|^{\frac{p}{p-1}}.
\end{eqnarray*}

As, \begin{equation*}
\int_{0}^T \int_{\G^{\ast}} f \psi = \int_{0}^T \Phi(2t/T) \int_{\G^{\ast}} f \Phi(P(x))  =  \frac{T}{2} \int_{0}^2 \Phi(t') \int_{\G^{\ast}} f \Phi(P(x)) \geq T \int_{\G^{\ast}} f \Phi(P(x))
\end{equation*}
we obtain $$T \int_{\G^{\ast}} f \Phi(P(x)) \leq \int_{0}^T \int_{\G^{\ast}} f \psi \leq C_1 T^{\frac{Q}{2}+1 - \frac{p}{p-1}}.$$
By dominated convergence theorem, $ \int_{\G^{\ast}} f \Phi \to \int_{\G^{\ast}} f $ as $T \to \infty$.
Using the fact that $p < \frac{Q}{Q-2}$ implies $\frac{Q}{2} < \frac{p}{p-1}$ we obtain $\int_{\G^{\ast}} f \leq 0$ by taking $T \to \infty$.
This contradicts the hypothesis that $\int_{\G^{\ast}} f > 0$.
So, no such $u$ exists and the statement of the theorem is proved.
\end{proof}

\begin{remark}
Note from the proof of \Cref{subcritical} that the assumption $p < \frac{Q}{Q-2}$ was used near the end.
Thus, we have the inequality, $$T \int_{\G^{\ast}} f \Phi(P(x)) \leq \int_{0}^T \int_{\G^{\ast}} f \psi \leq C_1 T^{\frac{Q}{2}+1 - \frac{p}{p-1}}$$ for every $p > 1$ and $Q \in \N$.
Since for $Q \in  \{ 1,2\}$ and $p > 1$ we have $\frac{Q}{2} - \frac{p}{p-1} < 0$, one can take $T \to \infty$ and conclude that $\int_{\G^{\ast}} f \leq 0$.
Hence, $p_c = \infty$ when $Q \in  \{ 1,2\}$, $u_0 \geq 0$ and $\int_{\G^\ast} f(x) dx > 0$.
\end{remark}

We now discuss the critical case.
\begin{theorem}\label{Critical}
Let $p = \frac{Q}{Q-2}$, $u_0 \geq 0$ and $\int_{\G^\ast} f(x) dx > 0$.
Then (\ref{MainProblem}) does not admit a global in time weak solution.
\end{theorem}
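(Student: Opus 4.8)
The plan is to run the test-function scheme of \Cref{subcritical} essentially verbatim, using the same separated test function $\psi(t,x) = \Phi(P(x))\,\Phi(2t/T)$ with $P(x)=d(x)/\sqrt{T}$, the same support bookkeeping, and the pointwise bounds $|\psi_t|\le C/T$ on $\text{supp}(\psi_t)$ and $|\Delta_\G\psi|\le C/T$ on $\text{supp}(\Delta_\G\psi)$ proved there. The decisive new feature is that at $p=\frac{Q}{Q-2}$ one has $\frac{p}{p-1}=\frac{Q}{2}$, so the scaling exponent $\frac{Q}{2}+1-\frac{p}{p-1}$ that governed \Cref{subcritical} is now exactly $1$. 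Hence the crude estimate only gives $T\int_{\G^{\ast}}f\,\Phi(P(x))\le C_1 T$, and sending $T\to\infty$ yields no contradiction. To break this borderline I would \emph{retain} the nonlinear term instead of discarding it.

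Concretely, keeping $\int_0^T\int_{\G^{\ast}}|u|^p\psi$ on the left of the inequality used in \Cref{subcritical} and applying H\"older's inequality with exponents $p$ and $p'$ to each term on the right, while restricting the integration to $\text{supp}(\psi_t)\cup\text{supp}(\Delta_\G\psi)$ (an annular-in-space, late-in-time region I call $\mathcal{A}_T$), produces an inequality of the shape
$$\int_0^T\int_{\G^{\ast}} f\psi + \int_0^T\int_{\G^{\ast}}|u|^p\psi \le C\Big(\int\!\!\int_{\mathcal{A}_T}|u|^p\psi\Big)^{1/p}\,T^{1/p'},$$
because the residual weight $\int\!\!\int|\psi|^{-1/(p-1)}|\Delta_\G\psi|^{p/(p-1)}$ is precisely the quantity bounded by $C_1 T^{\frac{Q}{2}+1-\frac{p}{p-1}}=C_1 T$ in \Cref{subcritical}, whose $1/p'$-power supplies the factor $T^{1/p'}$. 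Writing $I_T=\int_0^T\int_{\G^{\ast}}|u|^p\psi$ and $J_T=\int\!\!\int_{\mathcal{A}_T}|u|^p\psi\le I_T$, and recalling that $\int_0^T\int_{\G^{\ast}}f\psi\ge cT$ for large $T$ (since $\int_{\G^{\ast}}f\,\Phi(P(x))\to\int_{\G^{\ast}}f>0$), this displayed inequality is the engine of the proof.

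The contradiction I am aiming for is as follows. Since $\psi\le 1$ and $\mathcal{A}_T$ escapes to infinity as $T\to\infty$ -- its spatial part lives in the shell $\sqrt{T}\le d(x)\le 2\sqrt{T}$ and its temporal part in $T/2\le t\le T$ -- the \emph{finiteness} of $\int_0^\infty\int_{\G^{\ast}}|u|^p\,dx\,dt$ would force $J_T\to 0$ by dominated convergence, so that the right-hand side above is $o(T)$ while the left-hand side is at least $cT$: impossible. Thus the whole theorem reduces to the borderline integrability statement $\int_0^\infty\int_{\G^{\ast}}|u|^p\,dx\,dt<\infty$ together with the quantitative decay $J_T=o(T)$.

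The hard part will be exactly this reduction, and it is where the critical exponent bites. At the exact scaling $\frac{p}{p-1}=\frac{Q}{2}$ the weight and the forcing term balance; in fact feeding the lower bound $\int_0^T\int_{\G^{\ast}}f\psi\ge cT$ back into the displayed inequality forces $J_T\ge c'T$, so no decay can be extracted from dimensional scaling alone. To gain the missing factor I would sharpen the cutoff logarithmically, letting $\Phi$ transition over a shell $\{\sqrt{T}\le d(x)\le K\sqrt{T}\}$ of logarithmic width with $K$ large, which inserts a vanishing factor (of type $(\log K)^{1-p'}$) into the annular weight while leaving the $f$-term bounded below; optimizing in $K$ and $T$ should render the right-hand side genuinely $o\big(\int_0^T\int_{\G^{\ast}}f\psi\big)$ and close the argument. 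Should the logarithmic cutoff prove delicate in the sub-Riemannian geometry, an alternative is to pass to the Duhamel (mild) formulation and use the heat-kernel estimates \eqref{BoundsForHeatKernel} to bound $u$ from below by the Green potential $\int H(x,y)f(y)\,dy\sim d(x)^{-(Q-2)}$, whence $|u|^p\gtrsim d(x)^{-Q}$ exhibits the critical logarithmic divergence $\int_{d(y)\ge 1} d(y)^{-Q}\,dy=\infty$ coming from $(Q-2)p=Q$; iterating this in the integral equation drives the solution to blow up, contradicting global existence. Either way, capturing this borderline logarithmic gain at the exact critical scaling is the crux.
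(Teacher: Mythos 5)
Your diagnosis of why the subcritical scheme stalls at $p=\frac{Q}{Q-2}$ is correct, and your proposed remedy --- a spatial cutoff that transitions logarithmically over a wide shell --- is precisely the mechanism of the paper's proof: there the test function is $\psi(t,x)=\Phi(P(x))\Phi(t/T)$ with $P(x)=\ln\bigl(d(x)/\sqrt{R}\bigr)/\ln(\sqrt{R})$, the transition shell is $\sqrt{R}\le d(x)\le R$, and the $\Delta_\G$-weight gains the vanishing factor $\ln(\sqrt{R})^{-Q/2}$, which is your $(\log K)^{1-p'}$ gain since $p'=\frac{Q}{2}$ here. However, your implementation has a genuine gap: you anchor the shell at $\sqrt{T}$, keeping the parabolic coupling between the spatial and temporal scales, and then the time-derivative term ruins the argument no matter how $K$ is chosen. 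Indeed, for any separated test function whose spatial factor equals $1$ on $\{d(x)\le\sqrt{T}\}$, Fubini gives
\begin{equation*}
\int_0^T\!\!\int_{\G^{\ast}}|\psi|^{-\frac{1}{p-1}}|\psi_t|^{\frac{p}{p-1}}
=\Bigl(\int_0^T|\Phi(2t/T)|^{-\frac{1}{p-1}}\bigl|\tfrac{2}{T}\Phi'(2t/T)\bigr|^{\frac{p}{p-1}}\,dt\Bigr)
\Bigl(\int_{\G^{\ast}}\Phi(P(x))\,dx\Bigr)
\ \ge\ c\,T^{1-\frac{Q}{2}}\cdot T^{\frac{Q}{2}}\ =\ c\,T,
\end{equation*}
because $\frac{p}{p-1}=\frac{Q}{2}$ and the spatial integral is at least $|B(0,\sqrt{T})|\sim T^{Q/2}$; the constant $c>0$ depends only on the time profile, not on $K$. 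This is exactly the order of the forcing term $\int_0^T\int_{\G^{\ast}}f\psi\ge cT\int_{\G^{\ast}}f\,\Phi(P)$, so Young's (or H\"older's) inequality can only produce $\int_{\G^{\ast}}f\lesssim (\log K)^{1-Q/2}+C$ with a fixed $C>0$: no contradiction, and no optimization in $K=K(T)$ removes the constant, since it comes from the region where the spatial cutoff is identically $1$. (Your fallback via the Duhamel formulation also cannot prove the stated theorem as written: the paper only shows that mild solutions are weak solutions, not conversely, so blow-up of mild solutions does not exclude global weak solutions.)

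The missing ingredient is to decouple the two scales, which is what the paper does. Its spatial cutoff lives on the shell $[\sqrt{R},R]$ with $R$ a parameter independent of $T$; then the time-derivative weight is bounded by $C\,T^{1-\frac{Q}{2}}$ times the volume of the spatial support, i.e.\ by a quantity of the form $C\,T^{1-\frac{Q}{2}}R^{\frac{Q}{2}}$ in the paper's normalization, while the $\Delta_\G$-weight is $\lesssim T\,\ln(\sqrt{R})^{-Q/2}$. Dividing by $T$ yields
$\int_{\G^{\ast}}f\,\Phi(P)\le C\bigl(\ln(\sqrt{R})^{-Q/2}+T^{-Q/2}R^{Q/2}\bigr)$,
and now one lets the time scale grow much faster than the spatial scale (e.g.\ $T\to\infty$ first with $R$ fixed, then $R\to\infty$), so both error terms vanish and $\int_{\G^{\ast}}f\le 0$, contradicting the hypothesis. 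So your key idea (the logarithmic shell) is the right one and matches the paper, but without separating $R$ from $T$ the scheme you describe cannot close; the two-parameter limit is what converts the logarithmic gain into an actual contradiction.
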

\begin{proof}
Let us suppose on the contrary that $u$ is a global in time weak solution for (\ref{MainProblem}).
For any $0 < R < \infty$, set $P(x) := \frac{ \ln\left(\frac{d(x)}{\sqrt{R}}\right) }{\ln(\sqrt{R})}$ and define a test function (see \Cref{DefinitionLocalWeakSolution}) with separated variables as $\psi(t,x) = \Phi(P(x)) \Phi(t/T)$ where \begin{equation*}
\Phi(z) = \begin{cases} 
1, & z \in (- \infty ,0],\\
	\searrow, & z \in [0, 1],\\
	0, & z \in (1, \infty),
	\end{cases}
\end{equation*}
is a function in $C^\infty(\R)$ satisfying, for $$(\G^{\ast}) := \delta_{\frac{1}{\sqrt{R}}} \G^{\ast} := \left\{\delta_{\frac{1}{\sqrt{R}}}(x) : x \in \G^{\ast} \right\},$$ the estimates $$\int_{(\G^{\ast})} \Phi(P(x'))^{\frac{-1}{p-1}} dx' < \infty$$ and $$\int_{0}^2    |\Phi(t')|^{\frac{-1}{p-1}}|  \Phi'(t')|^{\frac{p}{p-1}} dt' < \infty$$  where $t' = \frac{t}{T}$ and $x' = \delta_{\frac{1}{\sqrt{R}}} (x)$.
Note that $$\text{supp}(\psi) \seq \left\{ (t,x) \in \R^+ \times \G^{\ast} : 0 \leq t \leq T, - \infty < P(x) \leq 1 \right\}.$$
As $- \infty \leq P(x) \leq 1$ if and only if $d(x) \leq R$, the set $\text{supp}(\psi)$ is compact.
The fact that $$\psi_t(x,t) = \frac{\partial}{\partial t}(\Phi(P) \Phi(t/T)) = \frac{1}{T}  \Phi(P) \Phi'(t/T)$$ implies the embedding   $$\text{supp}(\psi_t) \seq \left\{ (t,x) \in \R^+ \times \G^{\ast} : 0 \leq t \leq T, - \infty < P(x) \leq 1 \right\}.$$

To keep the computations for $\Delta_\G \psi(t,x) = \Phi(2t/T) \Delta_\G \Phi(P(x))$ clear, let $$p(w) = \frac{\ln(w)}{\ln({\sqrt{R}})}.$$
Then $p'(w) = \frac{1}{w \ln({\sqrt{R}})}$ and $p''(w) = -\frac{1}{w^2 \ln({\sqrt{R}})}$ for every $w \in \R^+$.
For $\Psi = \Phi \circ p$, which is a smooth function from $\R^{+}$ to $\R$, we have
$$\Psi' = (\Phi \circ p)' =  \Phi'(p) p'$$ and $$\Psi'' = (\Phi \circ p)'' = (\Phi'(p) p')' = (\Phi'(p))' p' + \Phi'(p) p''  =  \Phi''(p) (p')^2 + \Phi'(p) p''$$ which implies that 
\begin{eqnarray*}
\left(\Delta_\G  (\Psi \circ d)\right)(x) &=&   \sum_{k=1}^m  \left(X_k^2 (\Psi \circ d)\right) (x)\\
&=&  \sum_{k=1}^m \left( \Psi^{''}(d(x)) (X_k(d(x)))^2 + \Psi'(d(x)) X_k^2(d(x)) \right)\\
&=&  \sum_{k=1}^m \left( \left( \Phi^{''}(P(x)) \frac{1}{d(x)^2 \ln(\sqrt{R})^2} - \frac{\Phi^{'}(P(x))}{d(x)^2 \ln(\sqrt{R})} \right) \left( X_k(d(x)) \right)^2  \right. \\
& + & \left.  \frac{\Phi^{'}(P(x))}{d(x) \ln(\sqrt{R})}   \left( X_k^2(d(x))  \right)  \right).
\end{eqnarray*}

Hence $$\text{supp}(\Delta_\G(\psi)) \seq \left\{ (t,x) : 0 \leq P \leq 1, 0 \leq t \leq T \right\}.\smallskip$$
For large $R$  we have $\ln(\sqrt{R}) \geq 0$.
Thus,
\begin{eqnarray*}
0 \leq P(x) \leq 1 &\iff&   0 \leq  \ln(\frac{d(x)}{\sqrt{R}}) \leq \ln(\sqrt{R})\\
& \iff&  \exp{(0)} \leq  \exp{\left(\ln\left(\frac{d(x)}{\sqrt{R}}\right)\right)} \leq \exp{(\ln(\sqrt{R}))}     \\
&   \iff & 1 \leq \frac{d(x)}{\sqrt{R}} \leq \sqrt{R} \iff \sqrt{R} \leq d(x) \leq R,
\end{eqnarray*}
implying that $$\text{supp}(\Delta_\G(\psi)) \seq \left\{ ( t,x) : \sqrt{R} \leq d(x) \leq R, 0 \leq t \leq T \right\}.$$
For $(t,x) \in \R^+ \times \G^{\ast}$, just as in the subcritical case, we have
 \begin{eqnarray*}
\left(\Delta_\G  (\Psi \circ d)\right)(\delta_{\sqrt{R}} (x))
&=&  \sum_{k=1}^m \left( \frac{\Phi^{'}(P(\delta_{\sqrt{R}} (x)))}{d(\delta_{\sqrt{R}} (x)) \ln(\sqrt{R})}   \left( X_k^2(d(\delta_{\sqrt{R}} (x)))  \right) \right. \\
& -& \left.   \frac{\Phi^{'}(P(\delta_{\sqrt{R}} (x)))}{d(\delta_{\sqrt{R}} (x))^2 \ln(\sqrt{R})} \left( X_k(d(\delta_{\sqrt{R}} (x))) \right)^2    \right. \\
& + & \left.  \frac{ \Phi^{''}(P(\delta_{\sqrt{R}} (x)))}{d(\delta_{\sqrt{R}} (x))^2 \ln(\sqrt{R})^2} \left( X_k(d(\delta_{\sqrt{R}} (x))) \right)^2    \right)\\
&=&  \sum_{k=1}^m \left( \frac{\Phi^{'}(P(\delta_{\sqrt{R}} (x)))}{\sqrt{R} d(x) \ln(\sqrt{R})}   \left( \frac{1}{\sqrt{R}} X_k^2(d(x))  \right) \right. \\
& -& \left.  \frac{\Phi^{'}(P(\delta_{\sqrt{R}} (x)))}{R d(x)^2 \ln(\sqrt{R})} \left( X_k(d(x)) \right)^2    \right. \\
& + & \left.  \frac{ \Phi^{''}(P(\delta_{\sqrt{R}} (x)))}{R d(x)^2 \ln{(\sqrt{R})}^2} \left( X_k(d(x)) \right)^2    \right).
\end{eqnarray*}
Considering the sphere $S$ as in \Cref{subcritical}, we obtain that $$|\Delta_\G  (\Psi \circ d)| \leq  \frac{C}{R \ln{(\sqrt{R})}}$$ and $$\int_{0}^T \int_{\G^{\ast}} f \psi \leq \frac{1}{p' (\frac{p}{2})^{p'-1}} \Big( \int_{0}^T \int_{\G^{\ast}} |\psi|^{\frac{-1}{p-1}}|\Delta_\G \psi|^{\frac{p}{p-1}} + |\psi|^{\frac{-1}{p-1}}|\psi_t|^{\frac{p}{p-1}} \Big).$$

With change of the variables $x \to x'$ and $t \to t'$ we obtain  \begin{eqnarray*}
\int_{0}^T \int_{\G^{\ast}} |\psi|^{\frac{-1}{p-1}}|\Delta_\G \psi|^{\frac{p}{p-1}} & \leq  & \int_{0}^T \int_{\G^{\ast}} |\Phi(P(x)) \Phi(t/T)|^{\frac{-1}{p-1}} \left(\frac{C}{\ln{(\sqrt{R})}  R }\right)^{\frac{p}{p-1}} \\
& \leq & \left(\frac{C}{\ln{(\sqrt{R})}  R }\right)^{\frac{p}{p-1}} \int_{0}^T \int_{\G^{\ast}} |\Phi(P(x)) \Phi(t/T)|^{\frac{-1}{p-1}} \\
&\leq & \left(\frac{C}{\ln{(\sqrt{R})}  R }\right)^{\frac{p}{p-1}} R^{\frac{Q}{2}} T \int_{0}^2 \int_{(\G^{\ast})} |\Phi(P(x')) \Phi(t')|^{\frac{-1}{p-1}} \\
&\leq & \left(\frac{C}{\ln{(\sqrt{R})}  R } \right)^{\frac{Q}{2}} R^{\frac{Q}{2}} 2 T \int_{(\G^{\ast})} |\Phi(P(x'))|^{\frac{-1}{p-1}} \\
&\leq & C_1 T  \ln{(\sqrt{R})^{- \frac{Q}{2}}}
\end{eqnarray*} 
and similarly,
\begin{eqnarray*}
\int_{0}^T \int_{\G^{\ast}}  |\psi|^{\frac{-1}{p-1}}|\psi_t|^{\frac{p}{p-1}} &\leq&  \int_{0}^T \int_{\G^{\ast}}  |\Phi(P(x)) \Phi(t/T)|^{\frac{-1}{p-1}} \left|\frac{1}{T}  \Phi(P(x)) \Phi'(t/T) \right|^{\frac{p}{p-1}} \\
& \leq &  \left(\frac{1}{T}\right)^{\frac{p}{p-1}} \int_{0}^T \int_{\G^{\ast}}  |\Phi(P(x))|  |\Phi(t/T)|^{\frac{-1}{p-1}}|  \Phi'(t/T)|^{\frac{p}{p-1}}\\
& \leq & \left(\frac{1}{T}\right)^{\frac{Q}{2}} T R^{\frac{Q}{2}} \int_{0}^2 \int_{(\G^{\ast})}  |\Phi(P'(x))|  |\Phi(t')|^{\frac{-1}{p-1}}|  \Phi'(t')|^{\frac{p}{p-1}}\\
& \leq & C_1 T^{\frac{-Q}{2}} T R^{\frac{Q}{2}}.
\end{eqnarray*}
	
	Moreover, \begin{eqnarray*}
		\int_{0}^T \int_{\G^{\ast}} f \psi &=& \int_{0}^T \Phi(t/T) \int_{\G^{\ast}} f \Phi(P(x))  =  T \int_{0}^1 \Phi(t') \int_{\G^{\ast}} f \Phi(P(x)) \geq  C_2 T \int_{\G^{\ast}} f \Phi(P(x))
	\end{eqnarray*}
	implies $$\int_{\G^{\ast}} f \Phi(P(x)) \leq C' (  \ln{(\sqrt{R})^{- \frac{Q}{2}}} +  T^{\frac{-Q}{2}} R^{\frac{Q}{2}} ).$$
Putting $T^2 = R$ and then taking $R \to \infty$ we obtain $\int_{\G^{\ast}} f \leq 0$ which  contradicts the hypothesis that $\int_{\G^{\ast}} f > 0$.
So, no such $u$ exists and the statement is proved.
\end{proof}

One major difficulty that we encounter is that for a $\Delta_\G$-gauge $d$, the function $\nabla_\G d$ is identically one on $\G^{\ast}$ if and only if $\G$ is $(\R^n, +)$ \cite[p. 466]{Bonfiglioli}.
This is what makes it difficult to use the classical  technique of finding a function of the form $u(x) = \epsilon (1 + d(x))^{k}$ for suitable $k$ and $\epsilon$ such that $-\Delta u - u^p \geq 0$.
However, we now show that the ideas of \cite{Zhang} work on $\G$ when we take $d$ to be a $\Delta_\G$-gauge satisfying $d(x,y) \leq c (d(x) + d(y))$ for every $x,y \in \G$.
For this we study (\ref{MainProblem}) using an integral equation.
\begin{defin}(Mild solution)
A local in time mild solution of (\ref{MainProblem}) is a function $u \in C([0,T], C_0(\G))$ such that $$u(t) = e^{-t \Delta_\G} u(0) + \int_{0}^t e^{-(t-s) \Delta_\G} (|u(s)|^p + f) ds,$$ for any $ t \in [0,T)$.
The function $u$ is called a global mild solution of (\ref{MainProblem}) if $T = + \infty$.
\end{defin}

It is not difficult to see that the proof of \cite[Lemma 3.4]{Torebek} works on $\G$ and hence every mild solution is a weak solution as well.
To be able to do some analysis about existence of global positive mild solutions, we need a couple of indispensable lemmas.

\begin{lemma}\label{Proposition1OfZhang}
	For $\delta > 0$, there exists a constant $C$ depending only upon $\delta, Q$ and $w \geq 2$ such that
	$$\sup_{x \in \G} \int_\G \frac{1}{d(x,y)^{Q-2}} \frac{1}{1 + d(y)^{w+ \delta}} dy < C.$$
\end{lemma}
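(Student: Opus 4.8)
The plan is to reduce the two-variable integral to a one-variable (radial) integral by exploiting the left-invariance of the structure, and then to isolate the single sub-region where that reduction breaks down. The starting point is the observation that, since $d(x,y)=d(x^{-1}\star y)$ and the Haar measure coincides with the (left-invariant) Lebesgue measure, the substitution $z=x^{-1}\star y$ yields $\int_{\G} g(d(x,y))\,dy=\int_{\G} g(d(z))\,dz$ for any nonnegative $g$. Applying this together with the polar coordinate formula \eqref{IntegrationOfRadialFunctions}, I would first record the master quantity
\[
\int_{\G}\frac{dz}{d(z)^{Q-2}\,\bigl(1+d(z)^{w+\delta}\bigr)}
= Q\,|B(0,1)|\int_0^{\infty}\frac{s\,ds}{1+s^{w+\delta}} =: M,
\]
which is finite: near $s=0$ the integrand behaves like $s$, and near $s=\infty$ like $s^{\,1-w-\delta}$, integrable precisely because $w+\delta>2$ (this is exactly where $w\ge 2$ and $\delta>0$ enter). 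Note $M$ is independent of $x$.

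Next I would split $\G=\Omega_1\cup\Omega_2$ using the pseudo-triangle constant $c$, setting $C_2:=\max\{c(2c+1),1\}$ and
$$\Omega_1:=\{y:\ d(x,y)\le C_2\,d(y)\},\qquad \Omega_2:=\{y:\ d(x,y)> C_2\,d(y)\}.$$
On $\Omega_1$ the comparison $d(x,y)\le C_2\,d(y)$ gives $1+d(x,y)^{w+\delta}\le C_2^{\,w+\delta}\bigl(1+d(y)^{w+\delta}\bigr)$, so
\[
\int_{\Omega_1}\frac{dy}{d(x,y)^{Q-2}\bigl(1+d(y)^{w+\delta}\bigr)}
\le C_2^{\,w+\delta}\int_{\G}\frac{dy}{d(x,y)^{Q-2}\bigl(1+d(x,y)^{w+\delta}\bigr)}
= C_2^{\,w+\delta}\,M,
\]
the last equality again by left-invariance; this bound is uniform in $x$.

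The region $\Omega_2$ is the delicate one, and I expect it to be the main obstacle, since there $d(y)$ can be far smaller than $d(x,y)$ and the comparison above is unavailable. Here I would exploit the pseudo-triangle inequality twice: from $d(x,y)>c(2c+1)d(y)$ and $d(x,y)\le c(d(x)+d(y))$ one deduces $d(y)<\tfrac{d(x)}{2c}$, and then $d(x)\le c(d(x,y)+d(y))$ forces $d(x,y)>\tfrac{d(x)}{2c}$. Writing $R=d(x)$ and using \eqref{IntegrationOfRadialFunctions},
\[
\int_{\Omega_2}\frac{dy}{d(x,y)^{Q-2}\bigl(1+d(y)^{w+\delta}\bigr)}
\le \Bigl(\tfrac{2c}{R}\Bigr)^{Q-2}\!\!\int_{d(y)<R/2c}\frac{dy}{1+d(y)^{w+\delta}}
= C'\,h(R),\qquad
h(R):=R^{-(Q-2)}\!\!\int_0^{R/2c}\frac{s^{Q-1}}{1+s^{w+\delta}}\,ds.
\]
It remains to show $h$ is bounded on $(0,\infty)$. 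As $R\to 0^+$, $h(R)\sim c_Q R^2\to 0$; as $R\to\infty$ I would run a short case analysis on the sign of $Q-(w+\delta)$ (finite limiting integral when $Q<w+\delta$, growth $\sim R^{\,Q-w-\delta}$ when $Q>w+\delta$, and a logarithmic factor when $Q=w+\delta$), each time obtaining $h(R)\to 0$ using $Q\ge 3$ (so $Q-2\ge 1$) and $w+\delta>2$. Since $h$ is continuous on $(0,\infty)$ with vanishing limits at both ends, it attains a finite maximum $M_2$ depending only on $Q,w,\delta$ (and the fixed geometric constants $c,|B(0,1)|$). Combining the two regions gives the claimed uniform bound with $C=C_2^{\,w+\delta}M+C'M_2$.
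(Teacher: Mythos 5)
Your proof is correct, and it takes a genuinely more self-contained route than the paper's. The paper dispatches this lemma in a few lines: invoking translation invariance of the Haar measure, it declares the proof to be the same as that of Zhang's Euclidean Proposition 2.1 and verifies only the one group-theoretic ingredient, namely that the polar-coordinates formula (\ref{IntegrationOfRadialFunctions}) renders the radial integral $\int_\G d(y)^{-(Q-2)}\left(1+d(y)^{w+\delta}\right)^{-1}dy$ finite when $w+\delta>2$ --- which is exactly your master quantity $M$. The Euclidean argument hidden behind that citation is a splitting into regions near $x$, near $0$, and far from both, with a case analysis on the size of $|x|$; transplanting it to $\G$ requires tracking the quasi-triangle constant $c$, which the paper leaves implicit. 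Your two-region decomposition $\Omega_1=\{d(x,y)\le C_2\,d(y)\}$, $\Omega_2=\{d(x,y)> C_2\,d(y)\}$ is a real variant: on $\Omega_1$ you absorb both the singularity at $y=x$ and the decay at infinity in a single stroke by trading $1+d(y)^{w+\delta}$ for $1+d(x,y)^{w+\delta}$ and invoking translation invariance, while $\Omega_2\subseteq\{d(y)<d(x)/(2c)\}$ is handled by the pseudo-triangle inequality and the elementary boundedness of your function $h$. What your route buys is a complete, constant-explicit proof valid for a quasi-norm, making transparent exactly where $w+\delta>2$, $Q>2$ and homogeneity enter; what the paper's route buys is brevity. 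Two points you should state explicitly: $\Omega_2$ is empty when $x=0$ (so $d(x)>0$ wherever you divide by it), and the three-point form $d(x)\le c\,(d(x,y)+d(y))$ of the pseudo-triangle inequality that you use twice does hold for the gauge quasi-distance --- the paper itself uses it this way in the proof of \Cref{Proposition2OfZhang}.
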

\begin{proof}
Since the Haar measure on $\G$ is invariant under translation, the proof is same as that of \cite[Proposition 2.1]{Zhang} except that we need to use \Cref{IntegrationOfRadialFunctions} to obtain
\begin{eqnarray*}
		\int_\G \frac{1}{d(y)^{Q-2}} \frac{1}{1 + d(y)^{w + \delta}} dy &=&  \lim_{s \to \infty} \int_{\{ y \in \G: d(y) \leq s \}} \frac{1}{d(y)^{Q-2}} \frac{1}{1 + d(y)^{w + \delta}} dy \\
		&  = &  C \lim_{s \to \infty} \int_0^s \frac{a^{Q-1}}{(1 + a^{w + \delta})a^{Q-2}} da.
	\end{eqnarray*}
	Since $w \geq 2$ and $\delta> 0$ this integral is finite, giving the result.
\end{proof}

\begin{lemma}\label{Proposition2OfZhang}
For $\delta > 0$ and every $x,y \in \G$ we have
	$$\int_\G \frac{1}{d(x,y)^{Q-2}} \frac{1}{1 + d(y)^{Q+ \delta}} dy \leq \frac{C}{1+ d(x)^{Q-2}}$$ for some constant $C$ which is independent of $x$.
\end{lemma}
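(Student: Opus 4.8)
The plan is to compare the three relevant quantities $d(x,y)$, $d(x)$, and $d(y)$ by repeatedly applying the pseudo-triangle inequality $d(x,y) \le c(d(x) + d(y))$ together with its permutations $d(x) \le c(d(x,y) + d(y))$ and $d(y) \le c(d(x,y) + d(x))$, and then to split $\G$ into three pieces according to which of these quantities is the small one. Writing $R = d(x)$, I would first dispose of the range $R \le 1$ at once: there $1 + R^{Q-2} \le 2$, so \Cref{Proposition1OfZhang} (applied with $w = Q \ge 2$) already gives a uniform bound $C_0$ on the integral, and $C_0 \le \frac{2C_0}{1+R^{Q-2}}$. Hence the entire difficulty is to extract the decay $R^{-(Q-2)}$ in the regime $R \ge 1$, where we are genuinely using $Q > 2$.

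For $R \ge 1$ I would decompose $\G = A \cup B \cup C$ with
\[
A = \{y : d(y) \le R/(2c)\}, \quad B = \{y : d(x,y) \le R/(2c)\}, \quad C = \G \setminus (A \cup B).
\]
On $A$ the pseudo-triangle inequality forces $R = d(x) \le c\,d(x,y) + R/2$, hence $d(x,y) \ge R/(2c)$; pulling $d(x,y)^{-(Q-2)} \le (2c)^{Q-2} R^{-(Q-2)}$ out of the integral leaves $\int_\G (1 + d(y)^{Q+\delta})^{-1}\,dy$, which is finite by \eqref{IntegrationOfRadialFunctions} since $Q+\delta > Q$, giving $\int_A \le C_A R^{-(Q-2)}$. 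On $B$ the same inequality yields $d(y) \ge R/(2c)$, so $(1 + d(y)^{Q+\delta})^{-1} \le (2c)^{Q+\delta} R^{-(Q+\delta)}$; translation invariance of the Haar measure together with \eqref{IntegrationOfRadialFunctions} turns $\int_{d(x,y)\le R/(2c)} d(x,y)^{-(Q-2)}\,dy$ into $Q|B(0,1)|\int_0^{R/(2c)} a\,da$, a constant multiple of $R^2$, so $\int_B \le C_B R^{2-(Q+\delta)} \le C_B R^{-(Q-2)}$ for $R \ge 1$. Finally, on $C$ both $d(y) > R/(2c)$ and $d(x,y) > R/(2c)$ hold; bounding the kernel by $(2c)^{Q-2}R^{-(Q-2)}$ and computing $\int_{d(y) > R/(2c)} d(y)^{-(Q+\delta)}\,dy = C R^{-\delta}$ via \eqref{IntegrationOfRadialFunctions} gives $\int_C \le C_C R^{-(Q-2)-\delta} \le C_C R^{-(Q-2)}$. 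Summing the three contributions and using $1 + R^{Q-2} \le 2R^{Q-2}$ for $R \ge 1$ converts the estimate $C' R^{-(Q-2)}$ into the claimed bound $\frac{C}{1+d(x)^{Q-2}}$, with the final $C$ obtained by taking the maximum over the two regimes.

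The only genuinely delicate point is the bookkeeping with the quasi-norm constant $c$: because the $\Delta_\G$-gauge satisfies merely $d(x,y) \le c(d(x)+d(y))$ rather than a true triangle inequality, I must choose the thresholds $R/(2c)$ precisely so that each region really forces the claimed comparability of distances, and I must keep track that every power of $2c$ produced in the three estimates is an $x$-independent constant. Everything else is a routine reduction to one-dimensional integrals, exactly as in \cite[Proposition 2.2]{Zhang}, with translation invariance supplying the change of variables $y \mapsto x^{-1}\star y$ that recenters the balls at the origin and with $\delta > 0$, $Q > 2$ guaranteeing convergence of each radial integral coming from \eqref{IntegrationOfRadialFunctions}.
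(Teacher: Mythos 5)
Your proof is correct, and it diverges from the paper's precisely in the main regime $d(x)>1$. There the paper uses only a two-way split, by $d(y)\le d(x)/(2c)$ versus $d(y)\ge d(x)/(2c)$: the near piece is handled exactly like your region $A$ (quasi-triangle inequality forces $d(x,y)\ge d(x)/(2c)$, pull out the kernel, integrate radially), but on the far piece the paper never examines $d(x,y)$ at all; it uses $d(y)\ge d(x)/(2c)$ and $d(x)\ge 1$ to trade decay in $d(y)$ for decay in $d(x)$ via
\begin{equation*}
\frac{1}{1+d(y)^{Q+\delta}}\le\frac{C}{1+d(x)^{Q-2}d(y)^{2+\delta}}\le\frac{2C}{\left(1+d(x)^{Q-2}\right)\left(1+d(y)^{2+\delta}\right)},
\end{equation*}
pulls $\left(1+d(x)^{Q-2}\right)^{-1}$ outside the integral, and bounds the leftover singular integral uniformly in $x$ by a second appeal to \Cref{Proposition1OfZhang}, now with $w=2$. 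You instead subdivide the far region according to $d(x,y)\le d(x)/(2c)$ or $d(x,y)>d(x)/(2c)$, integrating the kernel singularity directly on $B$ (via translation invariance and \eqref{IntegrationOfRadialFunctions}) and bounding both factors pointwise on $C$; all your estimates check out, including the exponent comparisons $2-Q-\delta\le -(Q-2)$ and $-(Q-2)-\delta\le -(Q-2)$ for $R\ge 1$. The paper's route buys brevity, since \Cref{Proposition1OfZhang} absorbs the kernel singularity on the entire far region in one stroke; your route buys self-containedness, invoking \Cref{Proposition1OfZhang} only in the trivial case $d(x)\le 1$ (where you and the paper coincide) and otherwise using nothing beyond \eqref{IntegrationOfRadialFunctions}, at the price of one extra region and more bookkeeping with the constant $c$. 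Incidentally, your regions $A$ and $C$ are estimated by the identical mechanism, namely the kernel bound $d(x,y)^{-(Q-2)}\le (2c)^{Q-2}d(x)^{-(Q-2)}$ followed by a convergent integral of the density, so they could be merged into the single region $\left\{y: d(x,y)>d(x)/(2c)\right\}$; that would turn your argument into a clean two-way split on $d(x,y)$, dual to the paper's two-way split on $d(y)$.
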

\begin{proof}
The proof is refinement of the proof of \cite[Proposition 2.2]{Zhang} and we provide an elaborate reasoning for the sake of completeness.
When $d(x) \leq 1$, with $w = Q$ in \Cref{Proposition1OfZhang} we obtain $$ \int_\G \frac{1}{d(x,y)^{Q-2}} \frac{1}{1 + d(y)^{Q+ \delta}} dy \leq C = \frac{2C}{2} \leq \frac{2 C}{1+ d(x)^{Q-2}}.$$
	
Let us assume that $d(x) > 1$.
If $d(y) \leq \frac{d(x)}{2c}$ then the pseudo-triangle inequality  $d(x) \leq c(d(x,y) + d(y))$ tells us that  $\frac{d(x)}{2c} \leq d(x,y)$.
Moreover, if $d(y) \geq \frac{d(x)}{2c}$, then using the assumption that  $d(x) \geq 1$ we obtain
\begin{eqnarray*}
\frac{1}{1 + d(y)^{Q+ \delta}} &=& \frac{1}{1 + d(y)^{Q - 2} d(y)^{\delta + 2}} \leq  \frac{C}{1 + d(x)^{Q - 2} d(y)^{\delta + 2}} \\
&\leq&  \frac{2 C}{(1 + d(x)^{Q - 2}) (1 + d(y)^{2 + \delta}) }.
\end{eqnarray*}
With $w = 2$ in \Cref{Proposition1OfZhang}, we have
\begin{eqnarray*}
\int_\G \frac{1}{d(x,y)^{Q-2}} \frac{1}{1 + d(y)^{Q+ \delta}} dy&=& \int_{d(y) \geq \frac{d(x)}{2c}} \frac{1}{d(x,y)^{Q-2}} \frac{1}{1 + d(y)^{Q+ \delta}} dy \\
&+& \int_{d(y) \leq \frac{d(x)}{2c}} \frac{1}{d(x,y)^{Q-2}} \frac{1}{1 + d(y)^{Q+ \delta}} dy \\
&\leq& \int_{d(y) \geq \frac{d(x)}{2c}} \frac{C}{(1 + d(x)^{Q - 2}) (1 + d(y)^{2 + \delta})} \frac{1}{d(x,y)^{Q-2}} dy \\
&+& \frac{C}{d(x)^{Q-2}} \quad \text{(From \Cref{IntegrationOfRadialFunctions})} \\
&\leq& \frac{C}{1 + d(x)^{Q-2}}  \int_{d(y) \geq \frac{d(x)}{2c}} \frac{1}{d(x,y)^{Q-2}} \frac{1}{1 + d(y)^{2+ \delta}} dy \\
&+& \frac{C}{d(x)^{Q-2}} \\
		&\leq& \frac{C}{1 + d(x)^{Q-2}}  + \frac{C}{d(x)^{Q-2}} \leq \frac{C}{d(x)^{Q-2}} \\
		& \leq &  \frac{2 C}{1 + d(x)^{Q-2}} \ \ (\text{as } d(x) > 1).
	\end{eqnarray*}
	This completes the proof.
\end{proof}

\begin{theorem}\label{Supercritical}
	For $p > \frac{Q}{Q-2}$, there exists $f$ and $u_0$ such that a positive global mild solution of (\ref{MainProblem}) exists.
\end{theorem}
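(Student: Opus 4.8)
The plan is to invoke the Banach fixed point theorem to produce a mild solution that is trapped beneath a stationary majorant of the form $g(x) = \dfrac{A}{1+d(x)^{Q-2}}$, where $d$ is the $\Delta_\G$-gauge and $A>0$ is a small constant fixed at the end. The hypothesis $p > \frac{Q}{Q-2}$ enters precisely through the positivity of the exponent $\delta := (Q-2)p - Q > 0$, which is exactly what makes \Cref{Proposition1OfZhang} and \Cref{Proposition2OfZhang} applicable. First I would select the data: take $u_0 = 0$ (the statement only asks for the \emph{existence} of admissible data) and $f(x) = \dfrac{\kappa}{1+d(x)^{Q+\delta}}$ with $\kappa > 0$. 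Using \eqref{IntegrationOfRadialFunctions}, $f$ is continuous, vanishes at infinity, lies in $L^1(\G)$, and satisfies $\int_{\G^{\ast}} f > 0$, so the data are admissible.

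Next I would set up the iteration. Let $Y = \{ u \in C([0,\infty), C_0(\G)) : 0 \le u(t,x) \le g(x)\ \text{for all } t,x \}$, a nonempty closed, hence complete, subset of $C([0,\infty), C_0(\G))$ with the sup norm, and define $\Psi(u)(t) = \int_0^t e^{-(t-s)\Delta_\G}(u(s)^p + f)\,ds$. Since the fundamental solution of $\Delta_\G$ is $H(x,y) = \beta_d\, d(x,y)^{2-Q}$, positivity of the heat kernel yields, for every $\phi \ge 0$, the pointwise bound $\int_0^t e^{-(t-s)\Delta_\G}\phi\,ds \le \int_0^\infty e^{-s\Delta_\G}\phi\,ds = \beta_d \int_\G \dfrac{\phi(y)}{d(x,y)^{Q-2}}\,dy$. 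The self-map property $\Psi(Y) \seq Y$ then rests on two estimates. For the nonlinearity, the elementary inequality $(1+r^{Q-2})^p \ge c\,(1+r^{Q+\delta})$ gives $g^p \le C\,(1+d^{Q+\delta})^{-1}$, so \Cref{Proposition2OfZhang} bounds $\int_\G H(x,y) g(y)^p\,dy \le \dfrac{C_3 A^p}{1+d(x)^{Q-2}}$; for the forcing term, \Cref{Proposition2OfZhang} applies directly to give $\int_\G H(x,y) f(y)\,dy \le \dfrac{C_4 \kappa}{1+d(x)^{Q-2}}$. Hence $\Psi(u) \le \dfrac{C_3 A^p + C_4 \kappa}{1+d(x)^{Q-2}}$, and choosing $A$ small enough that $C_3 A^{p-1} \le \tfrac12$, then $\kappa \le \tfrac{A}{2C_4}$, forces the numerator to be at most $A$, so $\Psi(u) \le g$; positivity of $\Psi(u)$ is automatic.

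For the contraction estimate I would use $|a^p - b^p| \le p \max(a,b)^{p-1}|a-b|$ together with $0 \le u,v \le g$ to get $|u(s)^p - v(s)^p| \le p\, g^{p-1}\,\norm{u-v}_{\sup}$. The exponent bookkeeping $(Q-2)(p-1) = 2 + \delta$ is the key identity: it gives $g^{p-1} \le C\,(1+d^{2+\delta})^{-1}$, so \Cref{Proposition1OfZhang} with $w = 2$ bounds $\int_\G H(x,y) g(y)^{p-1}\,dy$ by a constant $C_5 A^{p-1}$ uniformly in $x$. Thus $\norm{\Psi(u)-\Psi(v)}_{\sup} \le p\,C_5 A^{p-1}\,\norm{u-v}_{\sup}$, and shrinking $A$ further so that $p\,C_5 A^{p-1} < 1$ makes $\Psi$ a contraction. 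The Banach fixed point theorem then supplies a unique $u \in Y$ with $\Psi(u) = u$, i.e.\ a global mild solution of \eqref{MainProblem}; since $f > 0$ and the heat kernel is strictly positive, $u(t,x) \ge \int_0^t e^{-(t-s)\Delta_\G} f\,ds > 0$ for $t > 0$, so the solution is positive.

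The routine points to verify are that $\Psi$ genuinely lands in $C([0,\infty), C_0(\G))$, namely continuity in $t$ and vanishing at infinity of $\Psi(u)(t)$, both of which follow from $u(s)^p + f \in C_0(\G)$ together with the strong continuity and $C_0$-invariance of the semigroup. The main obstacle is the simultaneous smallness calibration: the \emph{same} constant $A$ must satisfy both the self-map inequality and the contraction inequality, and one must confirm these are compatible; they are, since both required bounds have the form $A^{p-1} \le \text{const}$ and $p > 1$. Should one prefer a nonzero $u_0$, the only additional input is a uniform bound $e^{-t\Delta_\G} u_0 \le g$, which can be arranged by taking $u_0 \in C_0(\G)$ dominated by a small multiple of $\beta_d d^{2-Q}$ and combining the $L^\infty$-contractivity of the semigroup near the origin with the inequality $e^{-t\Delta_\G}(\beta_d d^{2-Q}) = \beta_d d^{2-Q} - \int_0^t h_s(\cdot,0)\,ds \le \beta_d d^{2-Q}$ away from it.
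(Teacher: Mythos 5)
Your proof is correct, and its skeleton coincides with the paper's: the same complete metric space of functions trapped under $A(1+d(x)^{Q-2})^{-1}$, the same integral map, the same reduction of the time-integrated heat kernel to the fundamental solution $H(x,y)=\beta_d\, d(x,y)^{2-Q}$, the same exponent bookkeeping $\delta=(Q-2)p-Q=(Q-2)(p-1)-2$, and the same invocation of \Cref{Proposition2OfZhang} (self-map property) and \Cref{Proposition1OfZhang} with $w=2$ (contraction). The genuine difference is your choice of data: you take $u_0=0$, whereas the paper takes $u_0=f=\epsilon(1+d^{Q+\delta})^{-1}$. This choice deletes the single longest portion of the paper's proof, namely the two-case ($d(x)>1$ versus $d(x)\le 1$) computation based on the Gaussian bounds \eqref{BoundsForHeatKernel} showing $e^{-t \Delta_\G} u_0 \leq C(1+d^{Q-2})^{-1}$, since with $u_0=0$ the linear term is simply absent from the fixed-point map. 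The trade-off is the one caveat worth recording: your fixed point vanishes identically at $t=0$, so it is positive only for $t>0$; if ``positive global mild solution'' is read to include the initial time, nonzero $u_0$ is needed, and then some substitute for the paper's estimate becomes unavoidable. Your closing sketch supplies exactly that, and in a cleaner way than the paper does: dominating $u_0$ by $\min\{A/2,\, c\beta_d d^{2-Q}\}$ and combining $L^\infty$-contractivity of the semigroup with the supersolution property $e^{-t\Delta_\G}(\beta_d d^{2-Q})=\beta_d d^{2-Q}-\int_0^t h_s(\cdot,0)\,ds\le \beta_d d^{2-Q}$ (valid by the semigroup law and Fubini) replaces the Gaussian computation entirely. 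So your route is a legitimate simplification of the same method; just state the $t=0$ convention explicitly, or promote your final paragraph from a remark to part of the main argument.
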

\begin{proof}
We intend to use the Banach fixed point theorem.
	For $M >0$, let
	$$ S_M = \left\{ u \in C([0, \infty), C_0(\G) ) : 0 \leq u(t)(x) \leq \frac{M}{1 + d(x)^{Q-2}} \text{ for every } t \in [0, \infty) \right\}.$$
Then $S_M$ is a complete metric space and we will be able to apply the Banach fixed point theorem to any contraction from $S_M$ to itself.
Define $$T_M : S_M \to S_M$$ by $$T_M(u)(t) = e^{-t \Delta_\G} u(0) + \int_{0}^t e^{-(t-s) \Delta_\G} (u(s)^p + f) ds.$$
The result will  follow if we can prove that $T_M$ is a well-defined contraction for some $M$.

As $p > \frac{Q}{Q-2}$, there exists $\delta > 0$ such that $\frac{1}{(1 + d(x)^{Q-2})^p} \leq \frac{C}{(1 + d(x)^{Q+ \delta})}$ for every $x \in \G$.
For $\epsilon > 0$,  fix $$u(0)(x) = f(x) = \frac{\epsilon}{1+ d(x)^{Q + \delta}}.$$
To prove that $T_M$ is well-defined, we show that there exists $C > 0$ for which $$e^{-t \Delta_\G} u(0)(x) \leq \frac{C}{1 + d(x)^{Q-2}}$$ and $$\left (\int_{0}^t e^{-(t-s) \Delta_\G} (u(s)^p + f) ds \right )(x) \leq \frac{C}{1 + d(x)^{Q-2}}.$$
Using the pseudo-triangle inequality $d(x) \leq c(d(y) + d(x,y))$ for every $x,y \in \G$, we obtain 
\begin{eqnarray*}
e^{-t \Delta_\G} u(0)(x) &=& \int_\G h_t(x,y) u(0)(y) dy = \epsilon \int_\G  \frac{h_t(x,y)}{1+ d(y)^{Q + \delta}} dy \\
&\leq &  \frac{\epsilon C}{t^{\frac{Q}{2}}}  \int_\G  \frac{1}{1+ d(y)^{Q + \delta}} \exp^{\frac{-d(x,y)^2}{C t}} dy\\
&=& \frac{\epsilon C}{t^{\frac{Q}{2}}}  \left[  \int_{d(y) > \frac{d(x)}{2c}}  \frac{1}{1+ d(y)^{Q + \delta}} \exp^{\frac{-d(x,y)^2}{C t}} dy \right. \\
&+& \left. \int_{d(y) \leq \frac{d(x)}{2c}}  \frac{1}{1+ d(y)^{Q + \delta}} \exp^{\frac{-d(x,y)^2}{C t}} dy \right] \\
&\leq& \frac{\epsilon C}{t^{\frac{Q}{2}}}  \left[  \frac{1}{1+ (\frac{d(x)}{2c})^{Q + \delta}}  \int_{d(y) > \frac{d(x)}{2c}}  \exp^{\frac{-d(x,y)^2}{C t}} dy \right. \\
&+& \left. \int_{d(y) \leq \frac{d(x)}{2c}}  \frac{1}{1+ d(y)^{Q + \delta}} \exp^{\frac{-d(x,y)^2}{C t}} dy \right].
\end{eqnarray*}
Since $\frac{d(x,y)^2}{C t} = d\left(\delta_{\frac{1}{\sqrt{Ct}}}x,\delta_{\frac{1}{\sqrt{Ct}}}y\right)^2$, with change of variable $x' \leftrightarrow \delta_{\frac{1}{\sqrt{Ct}}}x$ we obtain $$\int_\G \exp^{\frac{-d(x,y)^2}{C t}} dy =  \int_\G \sqrt{Ct}^{Q}\exp^{-d(x',y')^2} dy' = C t^{\frac{Q}{2}} \int_\G \exp^{-d(x',y')^2} dy' \leq C t^{\frac{Q}{2}}$$ where $C$ is independent of $x$ and the integral $\int_\G \exp^{-d(x',y')^2} dy'$ is finite because (\ref{IntegrationOfRadialFunctions}) applies.

We separately deal with the two possible cases.
Let us first assume that $d(x) > 1$.
Using the fact that $d(x) \leq c( d(y) + d(x,y))$ one obtains
	\begin{eqnarray*}
e^{-t \Delta_\G} u(0)(x) &\leq& \frac{\epsilon C}{t^{\frac{Q}{2}}}  \left[  \frac{1}{1+ (\frac{d(x)}{2c})^{Q + \delta}}  \int_{d(y) > \frac{d(x)}{2c}}  \exp^{\frac{-d(x,y)^2}{C t}} dy \right. \\
&+& \left. \int_{d(y) \leq \frac{d(x)}{2c}}  \frac{1}{1+ d(y)^{Q + \delta}} \exp^{\frac{-d(x,y)^2}{C t}} dy \right] \\
& \leq & \frac{\epsilon C}{t^{\frac{Q}{2}}}  \left[  \frac{t^{\frac{Q}{2}}}{1+ (\frac{d(x)}{2c})^{Q + \delta}} + \int_{d(y) \leq \frac{d(x)}{2c}}  \frac{1}{1+ d(y)^{Q + \delta}} \exp^{\frac{-d(x,y)^2}{C t}} dy \right] \\
& = & \frac{\epsilon C}{t^{\frac{Q}{2}}}  \left[  \frac{t^{\frac{Q}{2}}}{1+ (\frac{d(x)}{2})^{Q + \delta}} \right. \\
&+& \left. \int_{d(y) \leq \frac{d(x)}{2c}}  \frac{d(x,y)^{Q}}{(1+ d(y)^{Q + \delta}) d(x,y)^{Q}} \exp^{\frac{-d(x,y)^2}{C t}} dy \right] \\
& \leq & \frac{\epsilon C}{t^{\frac{Q}{2}}}  \left[  \frac{t^{\frac{Q}{2}}}{1+ (\frac{d(x)}{2})^Q} \right. \\
&+& \left. \frac{(2c)^Q}{d(x)^Q} \int_{d(y) \leq \frac{d(x)}{2}}  \frac{d(x,y)^{Q}}{(1+ d(y)^{Q + \delta})} \exp^{\frac{-d(x,y)^2}{2 C t}} \exp^{\frac{-d(x,y)^2}{2 C t}} dy \right] \\
& \leq & \frac{\epsilon C}{1+ d(x)^Q} + \frac{\epsilon C}{ d(x)^Q} \leq \frac{\epsilon  2 C}{1+ d(x)^{Q-2}} + \frac{\epsilon C}{ d(x)^Q}\\
&\leq&   \frac{\epsilon  2 C}{1+ d(x)^{Q-2}},
\end{eqnarray*}
	where the third last inequality comes from H\"older's inequality since $\int_\G \exp^{\frac{-d(x,y)^2}{2 C t}} dy \leq C t^{\frac{Q}{2}}$ and $\frac{d(x,y)^{Q}}{(1+ d(y)^{Q + \delta})} \exp^{\frac{-d(x,y)^2}{2 C t}} \leq C$, and the last inequality is true whenever $d(x) > 1$.

To attend to the other case, let $$S = \left\{ x \in \G: d(x) \leq 1 \right\}.$$
Then $S$ is a compact and hence the continuous function $x  \to \int_{d(y) \leq \frac{d(x)}{2}}  \frac{1}{1+ d(y)^{Q + \delta}} \exp^{\frac{-d(x,y)^2}{C t}} dy$ is bounded by $t^{\frac{Q}{2}} C$ on $S$.
In this case,
\begin{eqnarray*}
e^{-t \Delta_\G} u(0)(x) &\leq& \frac{\epsilon C}{t^{\frac{Q}{2}}}  \left[  \frac{1}{1+ (\frac{d(x)}{2c})^{Q + \delta}}  \int_{d(y) > \frac{d(x)}{2c}}  \exp^{\frac{-d(x,y)^2}{C t}} dy \right. \\
&+& \left. \int_{d(y) \leq \frac{d(x)}{2c}}  \frac{1}{1+ d(y)^{Q + \delta}} \exp^{\frac{-d(x,y)^2}{C t}} dy \right] \\
& \leq & \frac{\epsilon C}{t^{\frac{Q}{2}}}  \left[  \frac{t^{\frac{Q}{2}}}{1+ (\frac{d(x)}{2c})^{Q + \delta}} + t^{\frac{Q}{2}} C \right] \\
& \leq & \frac{\epsilon C}{t^{\frac{Q}{2}}}  \left[  \frac{t^{\frac{Q}{2}}}{1+ (\frac{d(x)}{2c})^{Q}} + \frac{2 t^{\frac{Q}{2}} C}{1+ d(x)^Q} \right] (\text{as } d(x) \leq 1) \\
&\leq& \frac{\epsilon C}{1+ d(x)^Q} \leq   \frac{\epsilon  2 C}{1+ d(x)^{Q-2}}.
\end{eqnarray*} 
Hence, $e^{-t \Delta_\G} u(0)(x) \leq \frac{C}{1 + d(x)^{Q-2}}$ for every $x \in \G$ and $t \in [0, \infty)$.

Now that we have the sought after bound for $e^{-t \Delta_\G} u(0)$, let us work with $\left (\int_{0}^t e^{-(t-s) \Delta_\G} (u(s)^p + f) ds \right )(x)$.
From the choice of $\delta$, we obtain
\begin{eqnarray*}
 \left (\int_{0}^t e^{-(t-s) \Delta_\G} (u(s)^p + f) ds \right )(x)  &=& \int_0^t e^{-(t-s) \Delta_\G} (u(s)^p(x) + f(x)) ds \\
&=& \int_0^t \int_\G h_{(t-s)}(x,y) (u(s)^p(y) + f(y)) dy ds \\
&\leq&  \int_{0}^t \int_\G h_{(t-s)}(x,y) \left(\frac{M^p}{(1 + d(y)^{Q-2})^p} + \frac{\epsilon}{1+ d(y)^{Q + \delta}}\right) dy ds\\
&\leq & \int_{0}^t \int_\G h_{(t-s)}(x,y) \left(\frac{C M^p}{1 + d(y)^{Q+ \delta}} + \frac{\epsilon}{1+ d(y)^{Q + \delta}}\right) dy ds\\
&=& \int_{0}^t \int_\G  h_{(t-s)}(x,y)\frac{C M^p + \epsilon}{1 + d(y)^{Q+ \delta}} dy ds.
\end{eqnarray*}

If $H$ denotes the fundamental solution for $\Delta_\G$, then $\Delta_\G (H) = 0$ in $\G^{\ast}$.
It follows from the definition of the heat kernel $h_t(x,y)$ that for every fixed $y \in \G$, $\frac{\partial h_t(x,y)}{\partial t} - \Delta_\G h_t(x,y) = 0$.
Integrating this equation over time and using (\ref{BoundsForHeatKernel})  we obtain $$0=\lim_{s \to \infty} \int_0^s \left(\frac{\partial h_t(x,y)}{\partial t} - \Delta_\G h_t(x,y) \right) dt =  - \lim_{s \to \infty}  \Delta_\G \int_0^s  h_t(x,y) dt.$$
Non-negativity of the heat kernel now gives  $$\int_0^s h_{s-t}(x,y) dt \leq \lim_{s \to \infty} \int_0^s h_{s-t}(x,y) dt = H(x,y).$$
Since $H(x,y) = \frac{\beta_d}{d(x,y)^{Q-2}}$ for some $\beta_d > 0$, using \Cref{Proposition2OfZhang} we obtain
\begin{eqnarray*}
\left (\int_{0}^t e^{-(t-s) \Delta_\G} (u(s)^p + f) ds \right )(x)  &\leq& \int_{0}^t \int_\G  h_{(t-s)}(x,y)\frac{C M^p + \epsilon}{1 + d(y)^{Q+ \delta}} dy ds \\
& \leq & \int_\G  H(x,y)\frac{C M^p + \epsilon}{1 + d(y)^{Q+ \delta}} dy\\
& \leq & \int_\G \frac{\beta_d}{d(x,y)^{Q-2}} \frac{C M^p + \epsilon}{1 + d(y)^{Q+ \delta}} dy\\
&\leq&  \frac{C (C M^p + \epsilon)(\beta_d)}{1+ d(x)^{Q-2}}.
\end{eqnarray*}
Thus, there exists small $M$ and $\epsilon$ such that  $T_M$ is well-defined on $S_M$.
	
To prove that $T_M$ is a contraction, note that $(Q-2)(p-1) - 2 = \delta > 0$  and hence \begin{eqnarray*}
\norm{ T_M(u_1) - T_M(u_2) }_{\sup} &=& \sup_{t \in [0, \infty)} \left\| \int_{0}^t \exp^{-(t-s)\Delta_\G}( u_1(s)^p - u_2(s)^p ) \right\|\\
& \leq & p \norm{u_1 - u_2}_{\sup} \sup_{t \in [0, \infty)}  \int_\G \int_{0}^t h_{(t-s)}(x,y) \left(\frac{M}{1 + d(y)^{Q-2}}\right)^{p-1} dy \\
& \leq & p M^{p-1} \norm{u_1 - u_2}_{\sup}  \sup_{x \in \G}  \int_\G H(x,y) \frac{1}{1 + d(y)^{2 + \delta}} dy \\
&\leq & C p M^{p-1} \norm{u_1 - u_2}_{\sup}
\end{eqnarray*}
where $\int_\G H(x,y) \frac{1}{1 + d(x,0)^{2 + \delta}} dy < \infty$ can be seen by using $w = 2$ in \Cref{Proposition1OfZhang}.
This completes the proof.
\end{proof}

\end{document}